\newcommand{\B}[1]{{\mathbf #1}}
\def\@settitle{\begin{center}%
  \baselineskip14\p@\relax
    {\Large\textit \@title}
  \end{center}%
}
\def\@setauthors{%
  \begingroup
  \def\thanks{\protect\thanks@warning}%
  \trivlist
  \centering\footnotesize \@topsep30\p@\relax
  \advance\@topsep by -\baselineskip
  \item\relax
  \author@andify\authors
  \def\\{\protect\linebreak}%
  {\scshape \authors}%
  \ifx\@empty\contribs
  \else
    ,\penalty-3 \space \@setcontribs
    \@closetoccontribs
  \fi
  \endtrivlist
  \endgroup
}
\newtheorem{thm}{Theorem}[section]
\newtheorem{thm*}{Theorem}
\newtheorem{defn}[thm]{Definition}
\newtheorem*{q*}{Question}
\theoremstyle{definition}
\newtheorem{ex}[thm]{Example}
\newtheorem*{rem*}{Remark}
\newtheorem*{rems*}{Remarks}
\newtheorem{cor*}{Corollary}
\numberwithin{figure}{section}
\numberwithin{table}{section}
\newcommand{\OP}{\operatorname}
\definecolor{dred}{RGB}{200,00,10}
\begin{document}

\title[Concordance of certain 3-braids and Gauss diagrams]{Concordance of certain 3-braids and Gauss diagrams}
\author[Brandenbursky]{Michael Brandenbursky}
\address{Department of Mathematics, Ben Gurion University, Israel}
\email{brandens@math.bgu.ac.il}
\keywords{braids, knots, concordance, Gauss diagrams}
\subjclass[2000]{57}

\begin{abstract}
Let $\beta:=\sigma_1\sigma_2^{-1}$ be a braid in $\B B_3$, where $\B B_3$ is the braid group on 3 strings
and $\sigma_1, \sigma_2$ are the standard Artin generators. 
We use Gauss diagram formulas to show that for each natural number $n$ not divisible by $3$ the knot 
which is represented by the closure of the braid $\beta^n$ is algebraically slice if
and only if $n$ is odd. As a consequence, we deduce some properties of Lucas numbers.
\end{abstract}

\maketitle
\section{Introduction} \label{S:intro}

Let $\OP{Conc(\B S^3)}$ denote the abelian group of concordance classes
of knots in $\B S^3$. Two knots $K_0,K_1\in \B S^3=\partial \B B^4$
are {\em concordant} if there exists a smooth embedding
$c\colon\B S^1\times [0,1]\to \B B^4$ such that
$c(\B S^1\times \{0\})=K_0$ and $c(\B S^1\times \{1\})=K_1$.
The knot is called {\em slice} if it is concordant to the unknot.
The addition in $\OP{Conc}(\B S^3)$ is defined by the connected
sum of knots. The inverse of an element $[K]\in \OP{Conc}(\B S^3)$ is
represented by the knot $-K^*$, where $-K^*$ denotes the mirror image of the
knot $K$ with the reversed orientation. 

Let $\OP{AConc(\B S^3)}$ denote the algebraic concordance group of knots in $\B S^3$. 
The elements of this group are equivalence classes of Seifert forms $[V_F]$ associated
with an arbitrary chosen Seifert surface $F$ of a given knot $K$. 
The addition in $\OP{AConc(\B S^3)}$ is induced by direct sum.
A knot $K$ is called {\em algebraically slice} if it has a Seifert matrix which is metabolic. 
It is a well known fact that every slice knot is algebraically slice. For more information about
these groups see \cite{MR2179265}.

Let $\B B_3$ denote the Artin braid group on $3$ strings and let $\sigma_1, \sigma_2$
be the standard Artin generators of $\B B_3$, i.e. $\sigma_i$ is represented by half-twist of $i+1$-th string 
over $i$-th string and $\B B_3$ has the following presentation
$$\B B_3=\left\langle \sigma_1,\sigma_2|\thinspace \sigma_1\sigma_2\sigma_1=\sigma_2\sigma_1\sigma_2\right\rangle .$$

In this paper we discuss properties of a family of knots in which
every knot is represented by a closure of the braid $\beta^n$, where $\beta=\sigma_1\sigma_2^{-1}\in\B B_3$
and $n\neq 0\OP{mod}3$. This family of braids is interesting in the following sense: the braid $\beta$ is 
the simplest (of the smallest length) non-trivial braid in $\B B_3$ whose stable commutator length is zero. 
Hence by a theorem of Kedra and the author the four ball genus of every knot in this family is bounded by $4$,
see \cite[Section 4.E.]{MR3426696}. 

\begin{thm*}\label{T:main}
Let $n$ be any natural number not divisible by $3$.  Then the closure of $\beta^n$ is of order 2 in $\OP{AConc}(\B S^3)$ if 
$n$ is even and the closure of $\beta^n$ is algebraically slice if $n$ is odd. 
\end{thm*}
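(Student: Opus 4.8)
The plan is to reduce the statement to a computation of the Seifert form of $\overline{\beta^n}$ and to read the relevant algebraic-concordance invariants off it, organising the combinatorics through Gauss diagram formulas. First I would fix the standard diagram of $\overline{\beta^n}$ coming from the word $(\sigma_1\sigma_2^{-1})^n$ and apply Seifert's algorithm, producing a surface built from three disks and $2n$ bands; alternatively, and more efficiently for the polynomial invariants, I would use the reduced Burau representation $\rho\colon\B B_3\to GL_2(\B Z[t,t^{-1}])$. Writing $M=\rho(\beta)$, one checks $\det M=1$ and $\operatorname{tr}M=1-t-t^{-1}$, so $M$ satisfies its Cayley--Hamilton recursion and the Alexander polynomial of the closure is $\det(M^n-I)/(1+t+t^2)$ up to units. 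Since $\operatorname{tr}(M^n)=\alpha^n+\alpha^{-n}$ with $\alpha+\alpha^{-1}=\operatorname{tr}M$, one gets $\det(M^n-I)=2-\operatorname{tr}(M^n)=-(\alpha^{n/2}-\alpha^{-n/2})^2$, a ``minus a square'' identity whose behaviour under clearing denominators is governed by the parity of $n$.

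The arithmetic heart is the specialisation at $t=-1$, where $1+t+t^2$ equals $1$, so the determinant of $\overline{\beta^n}$ equals $\bigl|\det(M^n-I)\bigr|=|2-\operatorname{tr}(M^n)|$ evaluated at $t=-1$. As $\operatorname{tr}(M)=3$ there, the numbers $a_n:=\operatorname{tr}(M^n)|_{t=-1}$ satisfy $a_n=3a_{n-1}-a_{n-2}$ with $a_0=2$, $a_1=3$, so $a_n=L_{2n}$, the $2n$-th Lucas number. Hence $\det\overline{\beta^n}=L_{2n}-2$, and the identity $L_{2n}=L_n^2-2(-1)^n$ gives $\det\overline{\beta^n}=L_n^2$ for $n$ odd and $\det\overline{\beta^n}=L_n^2-4$ for $n$ even. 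Because $3\nmid n$ forces $L_n$ to be odd, for even $n$ one has $(L_n-1)^2<L_n^2-4<L_n^2$, so the determinant is wedged strictly between consecutive squares and is never a perfect square; equivalently $\det\equiv\pm3\pmod 8$, so the Arf invariant, which has a clean Gauss diagram formula, is nontrivial.

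For odd $n$ the determinant is the perfect square $L_n^2$, which is only the necessary Fox--Milnor-type condition, so the real content is to establish algebraic sliceness outright. I would do this by producing an explicit metabolizer: a rank-$(n-1)$ direct summand $H$ of the Seifert homology on which the Seifert form vanishes and which equals its own annihilator, extracted from the ``minus a square'' factorisation together with the banded structure of the Seifert matrix of the braid closure. For even $n$ the Arf invariant (equivalently, the non-square discriminant $L_n^2-4$) shows the class is nonzero in $\OP{AConc}(\B S^3)$; to pin the order at exactly $2$ I would show $2[\overline{\beta^n}]=0$, using that the Turk's-head knot $\overline{(\sigma_1\sigma_2^{-1})^n}$ is amphichiral --- the symmetry interchanging $\sigma_1$ and $\sigma_2^{-1}$ identifies it with its reverse mirror --- so that $[\overline{\beta^n}]=-[\overline{\beta^n}]$ in $\OP{Conc}(\B S^3)$ and hence in $\OP{AConc}(\B S^3)$; this simultaneously rules out order $4$.

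The step I expect to be the main obstacle is the odd case: the determinant being a square is far from sufficient for algebraic sliceness, so constructing the metabolizer and verifying its self-annihilation from the Seifert matrix is where the work lies, and keeping the Gauss diagram bookkeeping of the $2n$ crossings consistent with the Burau computation is the delicate technical point. A secondary subtlety is confirming that \emph{all} Tristram--Levine signatures vanish, rather than merely the ordinary signature, so that no free part of $\OP{AConc}(\B S^3)$ survives; the amphichirality argument secures this as well.
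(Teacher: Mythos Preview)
Your treatment of the even case and of the order-at-most-two fact is correct, and for the even case it proceeds along a genuinely different path from the paper. Where the paper counts a Polyak--Viro arrow diagram in the Gauss diagram $G_n$ of $\widehat{\beta^{3n+i}}$ and establishes the recursion $\OP{Arf}(\widehat{\beta^{3n+i}}) \equiv \OP{Arf}(\widehat{\beta^{3(n-1)+i}}) + 1 \pmod 2$ by a direct combinatorial analysis of the six ``new'' arrows, you instead compute the determinant via the Burau specialisation at $t=-1$, identify it with $L_{2n}-2$, and then use the Lucas identity $L_{2n}=L_n^2-2(-1)^n$ together with $L_n$ odd for $3\nmid n$ to rule out a square determinant when $n$ is even. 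This is clean and self-contained; it is essentially the paper's Corollary run in reverse, so it buys you the determinant formula and the Lucas arithmetic as inputs rather than outputs, at the cost of not actually using the Gauss diagram formulas you announced. The amphichirality observation for $2[\widehat{\beta^n}]=0$ matches the paper's argument.

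The genuine gap is the odd case. You correctly flag that a square determinant is only a necessary condition, and you propose to build a metabolizer by hand from the banded Seifert matrix and the ``minus a square'' factorisation of $\det(M^n-I)$, but you do not carry this out, and it is not routine: the Fox--Milnor factorisation $\Delta_K(t)\doteq f(t)f(t^{-1})$ is again only necessary, so the work of producing a half-rank self-annihilating summand still lies ahead of you, and nothing in your outline indicates how the combinatorics of the $2n$ bands would yield it. The paper bypasses this entirely with a one-line citation: for odd $n$ with $3\nmid n$ the knot $\widehat{\beta^n}$ is strongly \emph{plus}-amphicheiral --- a strictly stronger symmetry than the negative amphichirality you invoked for the order-two step --- and by a theorem of Long every strongly plus-amphicheiral knot is algebraically slice. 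So the missing ingredient is precisely this upgrade in the symmetry (an orientation-reversing involution of $\B S^3$ fixing $\widehat{\beta^n}$ setwise while preserving its orientation) together with Long's result; with those in hand, no explicit metabolizer is needed.
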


We would like to add the following remarks:
\begin{itemize}
\item The above theorem is not entirely new. The fact that the closure of $\beta^n$ is a non-slice knot 
when $n$ is even was proved by Lisca \cite{Lisca} using a celebrated theorem of Donaldson 
(also \cite[Section 6.2]{S} implies the same result). However, our proof of this fact is different.
It uses Gauss diagram technique and is elementary.
\item It is still unknown whether the induced family of smooth or even algebraic concordance classes is infinite, 
and these seem to be hard questions.
\end{itemize}

Let $\{L_n\}_{n=1}^\infty$ be a sequence of Lucas numbers, i.e. it is a Fibonacci sequence with $L_1=1$ and $L_2=3$. 
Surprisingly, Theorem \ref{T:main} has a corollary which is the following number theoretic statement. 

\begin{cor*}\label{C:main}
Let $n\in \B N$. Then
\begin{enumerate}
\item $L_{12n\pm 4}$ is equivalent to $5\OP{mod} 8$ or $7\OP{mod} 8$ 
\item $L_{12n\pm 2}\equiv 3\OP{mod} 8$
\item $L_{12n\pm 2}-2$ is a square.
\end{enumerate}
\end{cor*}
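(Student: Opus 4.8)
The plan is to convert each topological assertion of the main theorem into an arithmetic one through a single bridge: the \emph{determinant} of the knot $K_m:=\widehat{\beta^m}$, namely $\det K_m=|\Delta_{K_m}(-1)|$. First I would compute this determinant explicitly. Using the reduced Burau representation $\bar\rho$ of $\B B_3$, one checks that at $t=-1$ the braid $\beta$ maps to an integral $2\times 2$ matrix $M:=\bar\rho(\beta)|_{t=-1}$ with $\OP{tr} M=3$ and $\det M=1$; its characteristic polynomial $x^2-3x+1$ has roots $\varphi^{\pm 2}$ (with $\varphi$ the golden ratio), so $\OP{tr}(M^m)=\varphi^{2m}+\varphi^{-2m}=L_{2m}$. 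Since the closure of a $3$-braid that is a knot satisfies $\Delta_{K_m}(t)\doteq\det\bigl(\bar\rho(\beta^m)-I\bigr)/(1+t+t^2)$, and $1+t+t^2=1$ at $t=-1$, I obtain
\[
\Delta_{K_m}(-1)\doteq\det(M^m-I)=\det M^m-\OP{tr} M^m+1=2-L_{2m},
\]
hence $\det K_m=L_{2m}-2$. Two sanity checks fix all sign conventions: $K_1$ is the unknot with $\det=1=L_2-2$, and $K_2$ is the figure-eight knot with $\det=5=L_4-2$.

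Next comes the index bookkeeping. Writing $L_{12n\pm 4}=L_{2(6n\pm 2)}$ and $L_{12n\pm 2}=L_{2(6n\pm 1)}$, I set $m=6n\pm 2$ (even, and $\not\equiv 0\ \OP{mod} 3$) for part (1), and $m=6n\pm 1$ (odd, and $\not\equiv 0\ \OP{mod} 3$) for parts (2) and (3). The main theorem then applies: $K_{6n\pm 1}$ is algebraically slice, while $K_{6n\pm 2}$ has order $2$ in $\OP{AConc}(\B S^3)$.

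For the odd case I would invoke the classical fact that an algebraically slice knot has a metabolic Seifert form, so $\Delta_K(t)\doteq f(t)f(t^{-1})$ and therefore $\det K=|\Delta_K(-1)|=f(-1)^2$ is a perfect square; being an odd square, it is $\equiv 1\ \OP{mod} 8$. Applied to $m=6n\pm 1$ this yields at once that $L_{12n\pm 2}-2=\det K_m$ is a perfect square (part (3)) and that $L_{12n\pm 2}-2\equiv 1$, i.e. $L_{12n\pm 2}\equiv 3\ \OP{mod} 8$ (part (2)).

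The even case, part (1), is where the real work lies: I must show $\det K_m=L_{12n\pm 4}-2\equiv 3$ or $5\ \OP{mod} 8$, equivalently that $\OP{Arf}(K_m)=1$. The topological input ``order $2$'' is by itself \emph{not} enough, since it only forbids $\det K_m$ from being a perfect square, which still permits $\det\equiv 1\ \OP{mod} 8$ (for instance $17$) and $\det\equiv 7\ \OP{mod} 8$. To pin down the class modulo $8$ I would compute the Arf invariant directly via $\OP{Arf}(K)\equiv a_2(K)\ \OP{mod} 2$, where $a_2$ is the second coefficient of the Conway polynomial; this fits the paper's method, as $a_2$ is the degree-two Vassiliev invariant and admits a Polyak--Viro Gauss-diagram formula. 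Evaluating that formula on the standard diagram of $\widehat{\beta^m}$ for even $m$ should give the value $1$ (consistent with $\OP{Arf}(K_2)=\OP{Arf}(4_1)=1$). This Gauss-diagram computation of $a_2\ \OP{mod} 2$ along the family is the main obstacle; once it returns $1$, I conclude $\det K_m\equiv 3$ or $5$, hence $L_{12n\pm 4}\equiv 5$ or $7\ \OP{mod} 8$, completing part (1).
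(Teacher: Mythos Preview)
Your proposal is correct and follows essentially the same architecture as the paper: establish the bridge $\det(\widehat{\beta^{m}})=L_{2m}-2$, deduce parts (2) and (3) from algebraic sliceness of $K_{6n\pm1}$, and deduce part (1) from $\OP{Arf}(K_{6n\pm2})=1$ together with Murasugi's congruence $\OP{Arf}(K)=0\Leftrightarrow\det K\equiv\pm1\ \OP{mod}\ 8$. The one substantive difference is how the determinant formula is obtained. The paper observes that $\widehat{\beta^{m}}$ is alternating with Tait graph the wheel $W_m$, then quotes the matrix--tree theorem and the known spanning-tree count $L_{2m}-2$ for $W_m$; you instead evaluate the reduced Burau representation at $t=-1$ and read off $\det(M^m-I)=2-L_{2m}$ from $\OP{tr}M^m=L_{2m}$. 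Your route is self-contained and avoids the appeal to graph theory, while the paper's route avoids any matrix computation; both are short. Your treatment of part (2) is also slightly tighter than the paper's: you go directly from ``odd perfect square'' to $\equiv 1\ \OP{mod}\ 8$, whereas the paper first passes through Murasugi to narrow to $\pm1\ \OP{mod}\ 8$ and only then rules out $-1$. Finally, you correctly flag that the ``order $2$'' conclusion of the main theorem does not by itself force $\OP{Arf}=1$; the paper handles this exactly as you anticipate, by invoking the Gauss-diagram computation of $c_2\ \OP{mod}\ 2$ carried out in the proof of the theorem.
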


\begin{rem*}
Corollary \ref{C:main} is not new. All parts of it can be proved directly.  
However, we think that it is interesting that a number theoretic result
can be deduced from a purely topological statement. 
We would like to point out that the proof (identical to ours) 
of the fact that $L_{12n\pm2}-2$ is a square for every $n$ 
was given first in \cite[Section 6.2]{S}.
\end{rem*}

\section{Proofs }\label{S:proofs}
Let us recall the notion of a Gauss diagram.

\begin{defn}\rm
Given a classical link diagram $D$, consider a collection of oriented circles parameterizing it. 
Unite two preimages of every crossing of $D$ in a pair and connect them by an arrow, 
pointing from the overpassing preimage to the underpassing one. 
To each arrow we assign a sign (writhe) of the corresponding crossing. 
The result is called the \textit{Gauss diagram} $G$ corresponding to $D$.
\end{defn}

We consider Gauss diagrams up to an orientation-preserving diffeomorphisms of the circles.
In figures we will always draw circles of the Gauss diagram with a counter-clockwise orientation.
A classical link can be uniquely reconstructed from the corresponding Gauss diagram \cite{MR1763963}.
We are going to work with based Gauss diagrams, i.e. Gauss diagrams with a base point
(different from the endpoints of the arrows) on one of the circles.

\begin{ex}\rm
Diagram of the trefoil knot together with the corresponding Gauss diagram is shown in Figure \ref{fig:trefoil+Gauss}.
\end{ex}

\begin{figure}[htb]
\centerline{\includegraphics[height=0.8in]{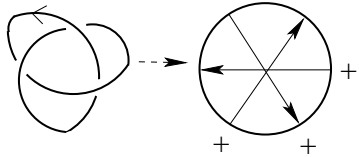}}
\caption{\label{fig:trefoil+Gauss} Diagrams of the trefoil.}
\end{figure}

\begin{proof}[Proof of Theorem \ref{T:main}]
One direction follows immediately from the work of Long \cite{MR735371}, i.e.
if $n$ is odd and not divisible by $3$ then the closure
of the braid $\beta^n$ is strongly plus-amphicheiral
and hence it is algebraically slice.

Let us prove the other direction. For a braid $\alpha$ denote by $\widehat{\alpha}$ its closure.
Let $i=1,2$. By reversing the orientation of all strings in the braid $\beta^{3n+i}$, 
one immediately sees that the knot $-\widehat{\beta^{3n+i}}$ is equivalent to the knot $\left(\widehat{\beta^{3n+i}}\right)^*$. 
Hence the knot $\widehat{\beta^{3n+i}}$ is of order at most 2 in $\OP{Conc}(\B S^3)$ and hence in $\OP{AConc}(\B S^3)$.
To complete the proof we must show that for each odd $n$ the knot $\widehat{\beta^{3n+1}}$ is not algebraically slice and for each even $n$ 
the knot $\widehat{\beta^{3n+2}}$ is not algebraically slice.

Given knot $K$ let $\OP{Arf}(K)$ be the Arf invariant of $K$. Recall that $\OP{Arf}(K):=c_2(K)\OP{mod}2$, where 
$c_2(K)$ is the coefficient before $z^2$ in the Conway polynomial of $K$. It is known that if $c_2(K)\OP{mod}2=1$, 
then the knot $K$ is not algebraically slice, see e.g. \cite{MR811549, MR2179265}.

\textbf{Case 1.} We consider the knot  $\widehat{\beta^{3n+1}}$ where $n$ is odd.
Suppose that $n=1$, then $\widehat{\beta^{3n+1}}=\widehat{\beta^4}$ and $\OP{Arf}(\widehat{\beta^4})=1$. 
The computation of this fact is simple and is left to the reader. It follows that in order to show that for every 
odd $n$ one has $\OP{Arf}(\widehat{\beta^{3n+1}})=1$, it is enough to show that the equality
$$\OP{Arf}(\widehat{\beta^{3n+1}})=\left(\OP{Arf}(\widehat{\beta^{3(n-1)+1}})+1\right)\OP{mod}2$$
holds for every $n$.

\begin{figure}[htb]
\centerline{\includegraphics[height=0.8in]{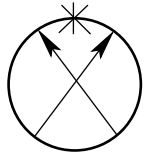}}
\caption{\label{fig:A2} Arrow diagram of Polyak and Viro.}
\end{figure}

It follows from the work of Polyak and Viro \cite{MR1316972} that one can compute $\OP{Arf}(K)$ 
by counting $\OP{mod}2$ an arrow diagram, shown in Figure \ref{fig:A2}, in any Gauss diagram of $K$. 
Let $n$ be any natural number. In Figure \ref{fig:beta-Gauss} we show a diagram 
of a knot $\widehat{\beta^{3n+1}}$ together with a corresponding Gauss diagram $G_n$.

\begin{figure}[htb]
\centerline{\includegraphics[height=4in]{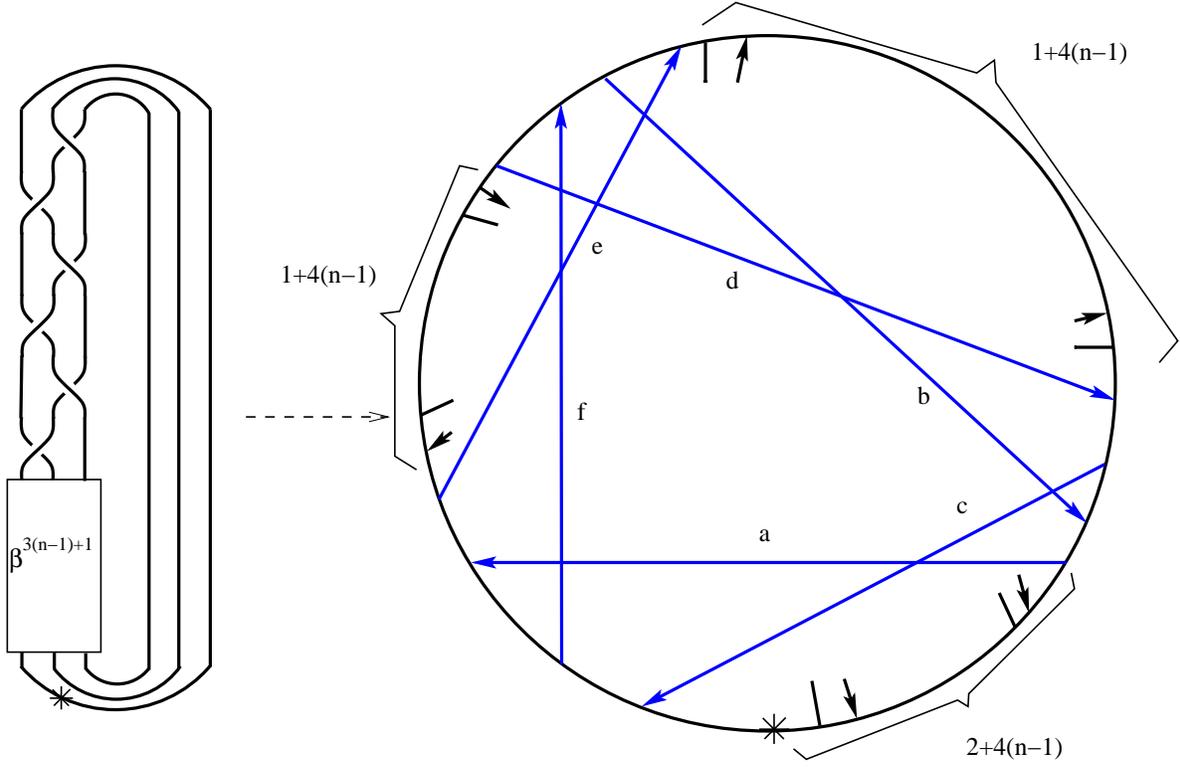}}
\caption{\label{fig:beta-Gauss} Knot and Gauss diagrams of $\widehat{\beta^{3n+1}}$.}
\end{figure}

Denote by $A_2(\widehat{\beta^{3n+1}})$ the number $\OP{mod}2$ of arrow diagrams, shown in Figure \ref{fig:A2}, in $G_n$. Hence $\OP{Arf}(\widehat{\beta^{3n+1}})=A_2(\widehat{\beta^{3n+1}})$. For simplicity we call the arrow diagram presented in Figure \ref{fig:A2} the diagram of type $A$.

1. There is only one type $A$ arrow diagram in $G_n$ which involve only blue arrows. It is a diagram whose arrows are labeled by $b$ and $c$.

2. The number of type $A$ arrow diagrams in $G_n$ which involve one black arrow and a blue arrow labeled by $a$ equals to the number of type $A$ arrow diagrams in $G_n$ which involve one black arrow and a blue arrow labeled by $c$.

3. The number of type $A$ arrow diagrams in $G_n$ which involve one black arrow and a blue arrow labeled by $b$ equals to the number of type $A$ arrow diagrams in $G_n$ which involve one black arrow and a blue arrow labeled by $d$.

4. There are no type $A$ arrow diagrams in $G_n$ which involve one black arrow and a blue arrow labeled by $e$ or by $f$.

5. By removing blue arrows from a Gauss diagram of $\widehat{\beta^{3n+1}}$, we get a Gauss diagram of $\widehat{\beta^{3(n-1)+1}}$.

Claims 1--5 yield the equality

$$A_2(\widehat{\beta^{3n+1}})=\left(A_2(\widehat{\beta^{3(n-1)+1}})+1\right)\OP{mod}2,$$
which concludes the proof of case 1.

\textbf{Case 2.} We consider the knot  $\widehat{\beta^{3n+2}}$ where $n=2k$ for $k\geq 0$.
Suppose that $n=0$, then $\widehat{\beta^{3n+2}}=\widehat{\beta^2}$ which is the figure eight knot 
and so $\OP{Arf}(\widehat{\beta^2})=1$. It follows that in order to show that for every 
even $n$ one has $\OP{Arf}(\widehat{\beta^{3n+2}})=1$, it is enough to show that the equality
$$\OP{Arf}(\widehat{\beta^{3n+2}})=\left(\OP{Arf}(\widehat{\beta^{3(n-1)+2}})+1\right)\OP{mod}2$$
holds for every $n$.

In Figure \ref{fig:beta-Gauss-even} we show a diagram 
of a knot $\widehat{\beta^{3n+1}}$ together with a corresponding Gauss diagram $G'_n$.
As explained above  $\OP{Arf}(\widehat{\beta^{3n+2}})=A_2(\widehat{\beta^{3n+2}})$. 

\begin{figure}[htb]
\centerline{\includegraphics[height=4in]{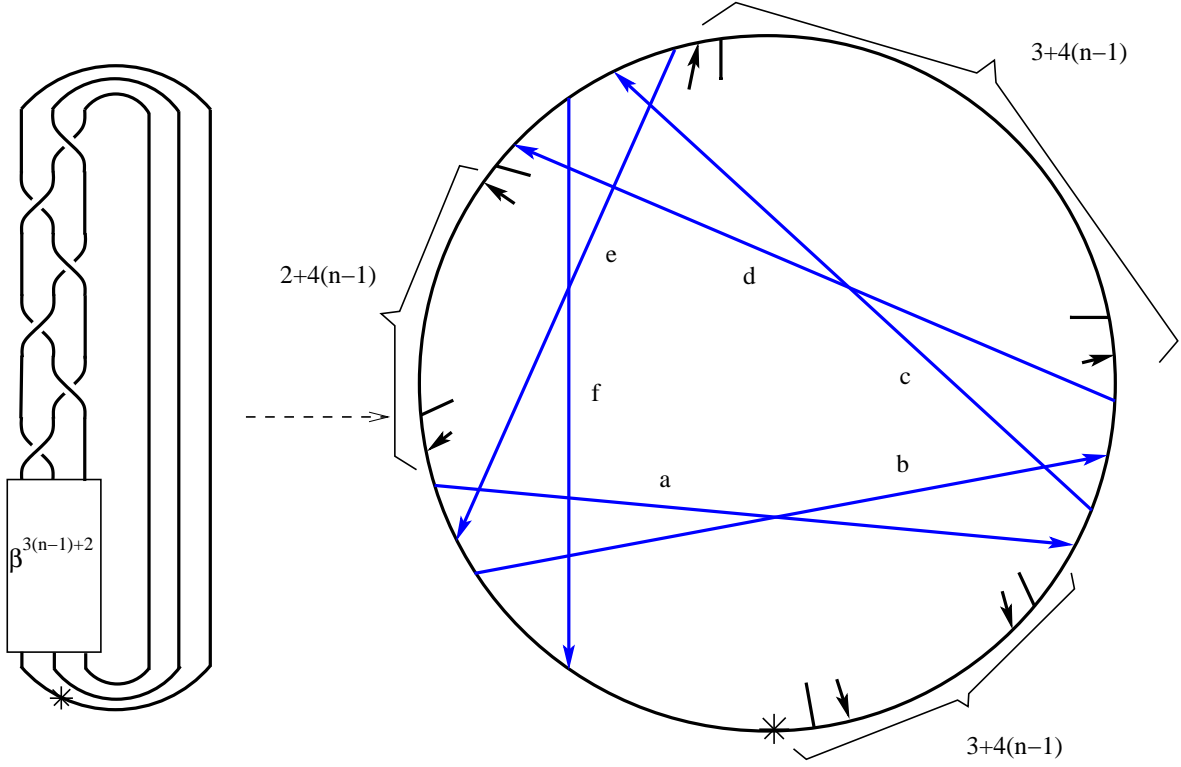}}
\caption{\label{fig:beta-Gauss-even} Knot and Gauss diagrams of $\widehat{\beta^{3n+2}}$.}
\end{figure}

1. There are three type $A$ arrow diagrams in $G'_n$ which involve only blue arrows. These are diagrams whose arrows are labeled by $a$ and $f$, $b$ and $f$, and $a$ and $e$.

2. The number of type $A$ arrow diagrams in $G'_n$ which involve one black arrow and a blue arrow labeled by $e$ equals to the number of type $A$ arrow diagrams in $G'_n$ which involve one black arrow and a blue arrow labeled by $f$.

3. The number of type $A$ arrow diagrams in $G'_n$ which involve one black arrow and a blue arrow labeled by $d$ equals to the number of type $A$ arrow diagrams in $G'_n$ which involve one black arrow and a blue arrow labeled by $c$.

4. There are no type $A$ arrow diagrams in $G'_n$ which involve one black arrow and a blue arrow labeled by $a$ or by $b$.

5. By removing blue arrows from a Gauss diagram of $\widehat{\beta^{3n+2}}$, we get a Gauss diagram of $\widehat{\beta^{3(n-1)+2}}$.

Claims 1--5 yield the equality

$$A_2(\widehat{\beta^{3n+2}})=\left(A_2(\widehat{\beta^{3(n-1)+2}})+1\right)\OP{mod}2,$$
which concludes the proof of case 2 and the proof of the theorem.
\end{proof}

\begin{proof}[Proof of Corollary \ref{C:main}]
It follows from the matrix-tree theorem that the determinant $\OP{det}(K)$of an alternating knot $K$ 
equals to the number of spanning trees in the associated Tait graph.
Note that the knots $\widehat{\beta^{3n+1}}$, $\widehat{\beta^{3n+2}}$ are alternating for each $n$ 
and hence their Tait graphs are Wheel graphs. It follows from \cite{MR0354532} that 
the number of spanning trees in the Wheel graph on $n+1$ points equals to $L_{2n}-2$ 
\footnote{This fact was communicated to the author by Brendan Owens.}. Hence
\begin{equation}\label{E:Lucas}
\OP{det}(\widehat{\beta^{n}})=L_{2n}-2
\end{equation}
for every $n$ not divisible by $3$.
It follows from Theorem \ref{T:main} that the knots $\widehat{\beta^{6n+1}}$, $\widehat{\beta^{6n-1}}$ are algebraically slice for each $n$.
Since the determinant of an algebraically slice knot is a square we conclude that
\begin{itemize}
\item $L_{12n\pm2}-2$ is a square for every $n$.
\end{itemize} 

In \cite{MR0238301} Murasugi proved that $\OP{Arf}(K)=0\Leftrightarrow \OP{det}(K)\equiv \pm 1\OP{mod}8$.
It follows that $L_{12n\pm2}$ is congruent to $1\OP{mod}8$ or $3\OP{mod}8$. But since a square number
can not be congruent to $-1\OP{mod}8$ we obtain
\begin{itemize}
\item $L_{12n\pm2}-2\equiv3\OP{mod}8$.
\end{itemize} 
In the proof of Theorem \ref{T:main} we showed that the Arf invariant of knots $\widehat{\beta^{6n+2}}$, $\widehat{\beta^{6n-2}}$ equals to $1$ for each $n$. 
It follows from Murasugi result that $\OP{det}(\widehat{\beta^{6n\pm 2}})\equiv 3\OP{mod}8$ or $\OP{det}(\widehat{\beta^{6n\pm 2}})\equiv 5\OP{mod}8$. By \eqref{E:Lucas}
we get 
\begin{itemize}
\item $L_{12n\pm 4}$ is equivalent to $5\OP{mod} 8$ or $7\OP{mod} 8$ 
\end{itemize}
which concludes the proof of the corollary.
\end{proof}  

\subsection*{Acknowledgments}
The author would like to thank Steve Boyer, Jarek Kedra, Ana Garcia Lecuona, 
Paolo Lisca and Brendan Owens for useful conversations.

This work was initiated during author's stay in CRM-ISM Montreal. 
The author was partially supported by the CRM-ISM fellowship. He would like
to thank CRM-ISM Montreal for the support and great research atmosphere. 

\bibliography{bibliography}
\bibliographystyle{acm}

\end{document}